\documentclass[a4paper,12pt]{article}
\usepackage{amsfonts}
\usepackage{amssymb,amsmath}
\usepackage{amsthm}
\usepackage{epsfig}                  
\usepackage{epstopdf}                
\usepackage{graphicx}
\usepackage{multirow}
\usepackage{mathrsfs}
\usepackage{comment}
\usepackage{bm}
\usepackage{booktabs}
\usepackage{enumitem}
\usepackage{listings}
\usepackage{color}
\usepackage[title]{appendix}
\usepackage{float}
\usepackage{algorithm}
\usepackage{algpseudocode}
\usepackage{fancyhdr}
\usepackage{setspace}
\usepackage{cases} 
\usepackage{chngcntr}
\usepackage{graphicx}
\usepackage{subcaption}
\usepackage{rotating}

\counterwithin{algorithm}{section} 

\usepackage{caption}
\DeclareMathOperator*{\argmin}{arg\,min}

\DeclareMathOperator*{\ri}{ri}
\DeclareMathOperator*{\Int}{int}

\newtheorem{theorem}{Theorem}[section]
\newtheorem{definition}{Definition}[section]
\newtheorem{lemma}{Lemma}[section]

\newtheorem{remark}{Remark}[section]

\newtheorem{example}{Example}[section]

\begin{document}

\title{\textbf{Adaptive relaxed CQ algorithm with dynamic stepsizes for the split feasibility problem}}

\author{
{\sc Qiao-Li Dong{\thanks{Corresponding author. email: dongql@lsec.cc.ac.cn}}}\\
{\footnotesize College of Science, Civil Aviation University of China, Tianjin 300300, China}\\
}
\date{}

\maketitle

\begin{abstract}
In this paper, we propose a relaxed CQ algorithm with dynamic stepsizes for the split feasibility problem (SFP), where the relaxed parameters are chosen adaptively. We establish the weak convergence of the proposed algorithm under standard assumptions and further derive its linear convergence  under the bounded linear regularity property of the SFP. Numerical results on LASSO problem demonstrate that the adaptively over-relaxed CQ algorithm with dynamic stepsizes outperforms the dynamic CQ algorithm  proposed in \cite{Xu-dynamic-CQ}.
\end{abstract}

\noindent{\bf Keywords:} Split feasibility problem; CQ algorithm; Linear convergence; Adaptively relaxed parameter.

\noindent{\bf 2020 AMS Subject Classification:}   47H05; 47H07; 47H10; 54H25.

\section{Introduction }

In this article, we consider the split feasibility problem (SFP), introduced by Censor and Elfving \cite{Censor1},
which is formulated as finding a point $x^*$ with the property
\begin{equation}
 \label{SFP}
x^*\in C\quad\hbox{and}\quad Ax^*\in Q,
\end{equation}
where $C$ and $Q$ are the nonempty closed convex subsets of the
 real Hilbert spaces $\mathcal {H}_1$ and
$\mathcal {H}_2$, respectively, and  $A :\mathcal {H}_1\rightarrow\mathcal {H}_2$  is a bounded
linear operator.
The SFP \eqref{SFP}  can be a model
for many inverse problems where constraints are imposed on the solutions in
the domain of a linear operator as well as in the operator's range (see, e.g. \cite{Dong-SFP-Nash-equilibrium,WangHu}).
Throughout this article, we assume that SFP (\ref{SFP}) is consistent, i.e., its solution set,  denoted by
$
\Omega =\{x\,|\,x\in C\,\hbox{and}\, Ax\in Q\},
$
 is nonempty.
 
The SFP can be reformulated as an unconstrainted optimization problem
\begin{equation}
 \label{MinP}
\min_{x\in\mathcal{H}} f(x)+\iota_C(x),
\end{equation}
where
$
f(x)=\frac12\|(I-P_Q)Ax\|^2
$
and $\iota_C$ is the indicator of $C.$
Recall that the objective function $f$ is convex and differentiable, with gradient
$
\nabla f(x)=A^*(I-P_Q)Ax.
$
Moreover, $\nabla f$ is Lipschitz continuous with Lipschitz constant $\|A\|^2$.
There exist numerous iterative methods for solving \eqref{SFP}, including the CQ algorithms \cite{{Byrne1},{Byrne0}}, general splitting methods \cite{Dong-general-splitting-method}, projection and contraction methods \cite{Dong-Optimal}, primal-dual-based iterative methods \cite{Zhaojing} and Polyak's gradient method \cite{Wangfh}, as well as their inertial variants \cite{Shehu-alternated-inertial-CQ,Shehu-inertial-CQ}. 
One of the simplest methods for solving the problem \eqref{SFP}, or equivalently problem \eqref{MinP}, is the gradient-projection method, also known as the forward-backward splitting method, which is given by
\begin{equation}
 \label{CQ}
x_{n+1}=P_C(x_n-\tau_n A^*(I-P_Q)A(x_n)).
\end{equation}
This method
was  referred as CQ algorithm since it involves the (metric) projections onto the sets $C$ and $Q$. 

There are three common strategies for choosing the stepsize $\tau_n$ in the CQ algorithm \eqref{CQ}. The first is to use a fixed stepsize $\tau_n\in (0,\frac{\displaystyle 2}{\displaystyle\|A\|^2})$ which depends on the the operator (matrix) norm $\|A\|$ (see \cite{{Byrne1},{Byrne0}}). Therefore, to implement the algorithm, one must first compute, or at least estimate, the operator norm of $A$, which is generally not an easy task in practice. The second strategy is to choose the stepsize $\tau_n$ adaptively by means of an Armijo-type line search (see \cite{Yang}). Determining $\tau_n$ in this way generally requires several evaluations of $\nabla f$ at each iteration, which can be computationally expensive because each evaluation involves a projection onto $Q$ and matrix-vector products. The third strategy is to use the dynamic stepsize introduced by L\'{o}pez et al. \cite{Xu-dynamic-CQ}, defined by
\begin{equation}
\label{dynamic_stepsize}
\tau_n=\rho_n\frac{f(x_n)}{\|\nabla f(x_n)\|^2},
\end{equation}
where $\rho_n\in(0,4).$ Note that the dynamic stepsize $\lambda_n$  does not need any prior information about the operator norm.  Since its inception in 2012, the dynamic stepsize has received much attention
due to its favorable numerical performance.

In this paper, we consider the relaxed variant of the CQ algorithm \eqref{CQ} as follows
\begin{equation}
\label{relaxed-CQ}
x_{n+1}=(1-\lambda_n)x_n+\lambda_nP_C\big(x_n - \tau_n A^*(I - P_Q)A x_n\big),
\end{equation}
where  $\lambda_n>0$ is an relaxed parameter.
Note that if $\lambda_n<1$,
then the scheme \eqref{relaxed-CQ} is an \texttt{underrelaxation} of \eqref{CQ} and, if $\lambda_n>1$, then the scheme \eqref{relaxed-CQ} is an \texttt{overrelaxation} of \eqref{CQ}. In particular,  $\lambda_n=1$ reduces \eqref{relaxed-CQ} to  \eqref{CQ}.

We propose an adaptive strategy for selecting $\lambda_n$ and show that our algorithm \eqref{relaxed-CQ} with dynamic stepsize \eqref{dynamic_stepsize} has a better contraction property than the  dynamic CQ algorithm \eqref{CQ}-\eqref{dynamic_stepsize} under certain conditions. Furthermore, we investigate the weak and linear convergence properties of the proposed method.  Numerical experiments on LASSO problem demonstrate the superior performance of the scheme \eqref{relaxed-CQ} with  dynamic stepsize \eqref{dynamic_stepsize} and adaptively chosen over-relaxed parameters compared with  the dynamic CQ algorithm \eqref{CQ}-\eqref{dynamic_stepsize}.

\vskip 1mm
The organization of the paper is as follows. In the next section we list several concepts and lemmas in a real Hilbert space that will be used in the convergence analysis of the algorithms. The main achievements of this paper are presented in Section 3.
In Section 4, numerical results are provided, which show the advantage of the proposed algorithm. A brief summary and commentary on the research results of this paper are  provided in Section 5.

\vskip 2mm

\section{Preliminaries}

Let $\mathcal{H}$ be a real Hilbert space with inner product $\langle \cdot,\cdot\rangle$ and norm $\|\cdot\|$. Some concepts and tools are listed in this section, which will be used in the proofs of our main results. The following notations will be used
throughout the rest of the paper.
\begin{itemize}
  \item [(i)] $I: \mathcal{H}\rightarrow \mathcal{H}$ denotes the identity operator.
  \item [(ii)]  $\rightarrow $ and $\rightharpoonup$ denote the strong convergence and  the weak convergence, respectively.
  \item [(iii)] $\omega_w(x_n) :=\{x\mid\exists \,\, {\rm subsequence}\,\,   \{x_{n_j}\}\subset\{x_n\}$
  such that $ x_{n_j} \rightharpoonup x\}$ denotes the $\omega$-weak limit point set of $ \{x_n\}$.
 \end{itemize}

The following identity will be used  for the main results (see  \cite[Corollary 2.15]{Bauschke-Combettes2017}):
\begin{equation}
\label{IDE}
  \|\alpha x+(1-\alpha)y\|^2=\alpha\|x\|^2+(1-\alpha)\|y\|^2-\alpha(1-\alpha)\|x-y\|^2,
\end{equation}
for all $\alpha \in \mathbb{R}$ and $(x,y) \in \mathcal{H}\times \mathcal{H}$.

We denote the interior and relative interior  of $K$ by $\Int K$ and $\ri K$, respectively. For $x \in \mathcal{H}$ and $r > 0$, we use
$B(x, r)$, and $\overline{B(x,r)}$ to denote the open metric ball and the closed metric ball at $x$ with radius $r$, respectively, that is
$$
B(x,r)=\{ y\in \mathcal{H} : \| x-y\|<r\} \quad\hbox{and}\quad \overline {B(x,r)}=\{ y\in \mathcal{H} : \| x-y\|\le r\}.
$$

{For a point $x$, the
the classical metric projection of
$x$ onto $K$ and the distance of $x$ from $K$, denoted by $P_K(x)$ and ${\rm d}_K(x)$ , are respectively defined by
$$
P_K(x):= \argmin\{  \| x - y\| :\,\,\forall y\in K\},
$$
and
$$
{\rm d}_K(x) := \inf\{\|x - y\| : \,\,\forall y\in K\}.
$$
}

Recall that a mapping $T: \mathcal{H}\rightarrow \mathcal{H}$ is called nonexpansive if
$$
\|Tx-Ty\|\leq \|x-y\|.
$$
It is well known that $P_K$ and $I-P_K$ are nonexpansive.

\begin{definition}
\label{Reg-Def}
\rm
The SFP \eqref{SFP} is said to satisfy the \texttt{bounded linear regularity property} if, for any $r>0$ such that $\Omega\cap \overline{B(0,r)}\neq \emptyset$, there exists $\gamma_r>0$ such that
$$
\gamma_r {\rm d}_\Omega(x) \leq  {\rm d}_Q(Ax)~~{\rm for~any }~x \in C\cap \overline{B(0,r)}.
$$
\end{definition}

Some sufficient conditions ensuring the regularity property  for the SFP (\ref{SFP}) are provided in
the following Lemma (see \cite{WangHu} for details).
\begin{lemma}
\label{lem-Reg-prop}
The SFP \eqref{SFP} satisfies the bounded linear regularity property provided
one of the following conditions holds:
\begin{itemize}
\item[{\rm(i)}]\,
$C$ and $Q$ are are polyhedrons;
\item[{\rm(ii)}]\,
$AC \cap  \Int Q\neq\emptyset$;
\item[{\rm(iii)}]\,
$A(\ri C) \cap Q\neq\emptyset$ and $Q$ is a polyhedron;
\item[{\rm(iv)}]\,
$AC\cap \ri Q \neq\emptyset$ and $C$ is a polyhedron and $Q$ is finite-codimensional;
\item[{\rm(v)}]\,
$A(\ri C)\cap\ri Q \neq\emptyset$ and  $Q$ is finite-codimensional.
\end{itemize}
\end{lemma}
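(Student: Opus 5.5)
This lemma is cited from \cite{WangHu}, so within the paper it can simply be invoked. If I were to prove it, I would reduce every case to a known regularity result for the two sets $C\times \mathcal{H}_2$ and $\mathrm{gph}(A)\cap(\mathcal{H}_1\times Q)$. More concretely, I would consider the pair of closed convex sets $C$ and $A^{-1}(Q)$, whose intersection is $\Omega$. The plan has two steps. First, show that the pair $\{C, A^{-1}(Q)\}$ is boundedly linearly regular: for each $r$ there is $\kappa_r$ with ${\rm d}_\Omega(x)\le \kappa_r\max\{{\rm d}_C(x),{\rm d}_{A^{-1}Q}(x)\}$ on $\overline{B(0,r)}$. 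Second, compare ${\rm d}_{A^{-1}Q}(x)$ with ${\rm d}_Q(Ax)$ for $x\in C$.

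For the first step I would use the classical Bauschke--Borwein type criteria for bounded linear regularity of two convex sets. These cover three situations: both sets are polyhedral (Hoffman's error bound); one set is polyhedral and the other meets it in its relative interior; or a standard interiority (Slater-type) condition holds, such as $0\in\Int(C-A^{-1}Q)$ or a relative-interior version with finite codimension. Each of the hypotheses (i)--(v) should map to one of these. If $C,Q$ are polyhedra, then $A^{-1}Q$ is a polyhedron, giving case (i). If $Q$ is a polyhedron, then $A^{-1}Q$ is polyhedral and the condition $A(\ri C)\cap Q\ne\emptyset$ gives $\ri C\cap A^{-1}Q\neq\emptyset$, giving case (iii). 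Cases (ii), (iv) and (v) follow from interiority via $\Int Q$ or $\ri Q$ pulled back by $A$. Here the finite-codimensionality of $Q$ makes $\ri A^{-1}Q \supseteq A^{-1}(\ri Q)$ meaningful, and it makes the relevant affine hulls closed.

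For the second step I need ${\rm d}_{A^{-1}Q}(x)\le \beta_r\,{\rm d}_Q(Ax)$ on bounded sets. This is the linear regularity of $\{\mathrm{range}\,A, Q\}$ (or of $\mathrm{gph}\,A$ with $\mathcal{H}_1\times Q$). It follows from the same hypotheses, again via Hoffman when $Q$ is polyhedral and via an open-mapping/Robinson--Ursescu argument under the interior conditions. Combining the two steps with ${\rm d}_C(x)=0$ for $x\in C$ gives $\gamma_r=(\kappa_r\beta_r)^{-1}$.

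The main obstacle is the second step in infinite dimensions, where $A$ need not have closed range. For that step I would use the Robinson--Ursescu theorem applied to the set-valued map $x\mapsto Ax-Q$ restricted to $C$, under the interiority hypotheses. I expect the relative-interior cases (iv)--(v) to need the finite-codimension assumption precisely to make this map's range closed. In practice I would defer to \cite{WangHu} for these details.
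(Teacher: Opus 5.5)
Your proposal agrees with the paper: the paper gives no argument of its own for this lemma and simply imports it from \cite{WangHu}, and your outline (bounded linear regularity of $\{C,A^{-1}(Q)\}$ plus an error bound ${\rm d}_{A^{-1}(Q)}(x)\le\beta_r\,{\rm d}_Q(Ax)$) is a workable way to reconstruct it. There are two slips in the outline: the error bound is equivalent to regularity of $\{\mathrm{gph}\,A,\ \mathcal{H}_1\times Q\}$, not of $\{\mathrm{range}\,A,\ Q\}$ (the latter controls only $\|A(x-z)\|$, not $\|x-z\|$, when $A$ has non-closed range), and Robinson--Ursescu should be applied to $x\mapsto Ax-Q$ on all of $\mathcal{H}_1$ (zero set $A^{-1}(Q)$) rather than to its restriction to $C$, because the restricted map can violate the interiority condition in case (iv), e.g.\ for $\mathcal{H}_1=\mathbb{R}$, $\mathcal{H}_2=\mathbb{R}^2$, $Ax=(x,0)$, $C=[0,\infty)$, $Q=\{0\}\times[-1,1]$ one gets $AC-Q=[0,\infty)\times[-1,1]$, with $0$ on its boundary.
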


\begin{lemma} {\rm\cite[Proposition 3.2]{Xu-SFP}}\label{fixed-point}
Let $\tau > 0$ and $x^* \in \mathcal{H}$. Then $x^*$ solves the SFP (\ref{SFP}) if and only if $x^*$ solves the fixed point equation
$$
x^* = P_C\big(x^*- \tau A^*(I - P_Q)Ax^*\big).
$$
\end{lemma}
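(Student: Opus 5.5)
The statement is Lemma~\ref{fixed-point}: $x^*\in\Omega$ if and only if $x^*=P_C\big(x^*-\tau A^*(I-P_Q)Ax^*\big)$. I would prove the two directions separately. The only tools needed are the variational characterization of the projection, $z=P_K(u)$ iff $z\in K$ and $\langle u-z,y-z\rangle\le 0$ for all $y\in K$, together with the consistency assumption $\Omega\neq\emptyset$.

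\emph{Necessity.} Let $x^*\in\Omega$. Then $Ax^*\in Q$, so $(I-P_Q)Ax^*=0$. The right-hand side therefore reduces to $P_C(x^*)$, and $P_C(x^*)=x^*$ because $x^*\in C$.

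\emph{Sufficiency.} Suppose $x^*=P_C(x^*-\tau A^*(I-P_Q)Ax^*)$. Immediately $x^*\in C$. The projection inequality with $u=x^*-\tau A^*(I-P_Q)Ax^*$ and $z=x^*$ gives
$$
\langle A^*(I-P_Q)Ax^*,\,y-x^*\rangle\ge 0\quad\text{for all } y\in C,
$$
after dividing by $\tau>0$. Equivalently, $\langle (I-P_Q)Ax^*,\,Ay-Ax^*\rangle\ge0$ for all $y\in C$. Now pick any $\bar x\in\Omega$, which exists by consistency, and set $y=\bar x$. Write $w=Ax^*$ and $q=A\bar x\in Q$. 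Then
$$
\langle w-P_Qw,\,q-w\rangle\ge0 .
$$
Separately, since $q\in Q$, the projection characterization for $Q$ gives $\langle w-P_Qw,\,q-P_Qw\rangle\le 0$. Subtracting the first inequality from the second yields
$$
\langle w-P_Qw,\,w-P_Qw\rangle\le 0,
$$
that is, $\|(I-P_Q)Ax^*\|^2\le0$. Hence $Ax^*\in Q$, and so $x^*\in\Omega$.

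The only delicate step is the last one, where consistency of the SFP is essential. Without a point $\bar x\in\Omega$, the fixed-point equation only says that $x^*$ minimizes $f$ over $C$, not that $f(x^*)=0$. An equivalent way to phrase the step is that the fixed points are exactly the minimizers of the convex function $f+\iota_C$. Since $\Omega\ne\emptyset$, the minimum value of $f$ over $C$ is $0$, so every minimizer lies in $\Omega$.
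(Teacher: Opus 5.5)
Your proof is correct. The necessity direction is immediate. In the sufficiency direction, you test the projection inequality for $C$ at $\bar x\in\Omega$ and the projection inequality for $Q$ at $A\bar x$, which gives $\|(I-P_Q)Ax^*\|^2\le 0$ exactly as you compute.

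The paper gives no proof of this lemma; it only cites \cite[Proposition 3.2]{Xu-SFP}. Your argument is the standard one in substance: fixed points of $P_C(I-\tau\nabla f)$ are the minimizers of $f$ over $C$, and consistency forces the minimum value to be $0$. You are also right that the consistency assumption, imposed throughout the paper but not restated in the lemma, is indispensable for the sufficiency direction.
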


	\begin{lemma}\label{convergence_lemma}
		\textnormal{(\cite[Lemma 2.47]{Bauschke-Combettes2017})} Let $K$ be a nonempty subset of $\mathcal{H}$, and let $\{x_n\}$ be a sequence in $\mathcal{H}$. Assume that the following conditions hold:
		\begin{itemize}
			\item[\rm(i)] for every $x\in K$, $\lim_{n\to\infty}\|x_n-x\|$ exists;
			\item[\rm(ii)] every weak sequential cluster point of $\{x_n\}$ belongs to $K$, i.e., $\omega_w(x_n)\subseteq K$.
		\end{itemize}
		Then the sequence $\{x_n\}$ converges weakly to a point in $K$.
	\end{lemma}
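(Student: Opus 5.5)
The statement just given is Lemma~\ref{convergence_lemma}, the Opial-type criterion from Bauschke--Combettes, so the proof is the classical argument by contradiction built on weak compactness and the Hilbert space parallelogram-type identity.

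First I will show that $\{x_n\}$ is bounded and has at least one weak cluster point. By (i), assuming $K\neq\emptyset$, we may pick some $x\in K$. Then $\lim_n\|x_n-x\|$ exists, so $\{\|x_n-x\|\}$ is bounded and hence $\{x_n\}$ is bounded. Bounded sequences in a Hilbert space have weakly convergent subsequences, so $\omega_w(x_n)\neq\emptyset$. By (ii), $\omega_w(x_n)\subseteq K$. It therefore suffices to prove that $\omega_w(x_n)$ is a singleton. Indeed, if every weakly convergent subsequence of the bounded sequence $\{x_n\}$ has the same limit $\bar x$, then the whole sequence converges weakly to $\bar x$. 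Otherwise some $u$ and $\varepsilon>0$ would admit a subsequence with $|\langle x_{n_k}-\bar x,u\rangle|\ge\varepsilon$, and extracting a further weakly convergent subsequence would produce a second cluster point, a contradiction.

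For uniqueness, take $y,z\in\omega_w(x_n)$, with $x_{k_n}\rightharpoonup y$ and $x_{l_n}\rightharpoonup z$. Both lie in $K$, so by (i) the limits $\ell_y=\lim\|x_n-y\|$ and $\ell_z=\lim\|x_n-z\|$ exist. Expanding the square gives
$$
2\langle x_n, y-z\rangle=\|x_n-z\|^2-\|x_n-y\|^2+\|y\|^2-\|z\|^2 .
$$
Hence $\langle x_n,y-z\rangle$ converges to some limit $L$. Passing to the limit along the two subsequences, weak convergence yields $\langle y,y-z\rangle=L=\langle z,y-z\rangle$. Subtracting gives $\|y-z\|^2=0$, so $y=z$. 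This identity-based step is the only non-routine point: the key observation is that the right-hand side has a limit along the full sequence, so different subsequences must produce the same value. The rest is standard weak sequential compactness.
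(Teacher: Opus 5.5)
Your argument is correct and is essentially the standard proof of Lemma 2.47 in Bauschke--Combettes; the paper only cites that lemma and gives no proof of its own. The steps match the book: boundedness from (i), reduction to showing $\omega_w(x_n)$ is a singleton (your sub-subsequence argument plays the role of their Lemma 2.46), and the identity $2\langle x_n,y-z\rangle=\|x_n-z\|^2-\|x_n-y\|^2+\|y\|^2-\|z\|^2$ evaluated along the two subsequences to force $y=z$.
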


\begin{lemma}
\label{simple_inequality}
Let $N\ge0$, $D>0$, and $\lambda=\sigma(N+D)/D$ with $\sigma\in(0,1)$. Then
\[
\lambda\big[N+(1-\lambda)D\big]=\sigma(1-\sigma)\,\frac{(N+D)^2}{D}.
\]
\end{lemma}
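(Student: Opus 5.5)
This identity is purely algebraic, so the plan is to substitute the definition of $\lambda$ and simplify. Write $S=N+D$, so that $\lambda=\sigma S/D$; the hypothesis $D>0$ makes this well defined. The hypotheses $N\ge 0$ and $\sigma\in(0,1)$ are not needed for the equality itself. They only matter later, where they give $\lambda>0$ and a nonnegative right-hand side.

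The key step is to rewrite the bracket in terms of $S$. We have
\[
N+(1-\lambda)D=(N+D)-\lambda D=S-\sigma S=(1-\sigma)S.
\]
Multiplying by $\lambda=\sigma S/D$ then gives
\[
\lambda\big[N+(1-\lambda)D\big]=\frac{\sigma S}{D}\,(1-\sigma)S=\sigma(1-\sigma)\,\frac{(N+D)^2}{D},
\]
which is the claimed identity.

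There is no real obstacle here. The only point requiring care is to notice that $\lambda D=\sigma(N+D)$, so the bracket collapses to $(1-\sigma)(N+D)$; after that the argument is a one-line computation.
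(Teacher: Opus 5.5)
Your proof is correct: substituting $\lambda D=\sigma(N+D)$ collapses the bracket to $(1-\sigma)(N+D)$, and multiplying by $\lambda$ gives the identity. The paper states this lemma without proof and treats it as a direct computation, so your substitution is exactly the intended verification.
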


\section{Main results}
In this section, we first introduce an adaptive relaxed CQ algorithm with dynamic stepsizes. Then, we prove the weak  and linear convergence of the proposed algorithm.

\subsection{Algorithm}

Now we present an adaptive strategy for selecting $\lambda_n$ for the scheme \eqref{relaxed-CQ}, which leads to the formulation of our proposed algorithm.

\begin{algorithm}[H]
\counterwithin{algorithm}{section}
\caption{Adaptive relaxed CQ algorithm with dynamic stepsizes}
\label{relaxed_Dynamic_CQ}
\begin{enumerate}
[label=Step \arabic*., leftmargin=*, align=left, itemsep=0pt, parsep=0pt, start=0, topsep=0pt,partopsep=0pt]
\item  Choose an initial guess $x_0\in\mathcal H$ arbitrarily. Choose $\rho\in(0,4)$ and $\sigma\in(0,1).$

\item   Compute
\begin{equation}
 \label{relaxed_CQ}
x_{n+1}=(1-\lambda_n)x_n+\lambda_nP_C\big(x_n - \tau_n A^*(I - P_Q)A x_n\big),
\end{equation}
where the stepsize $\tau_n$ is chosen in such a way that
\begin{equation}\label{tau_n}
\tau_n = \rho\frac{ f(x_n)}{\|\nabla f(x_n)\|^2}
\end{equation}
 and the relaxed parameter $\lambda_n$ is chosen in such a way that
\begin{equation}\label{lambda_n}
\lambda_n=\sigma\bigg(1+\rho(4-\rho)\frac{f^2(x_n)}{\|\nabla f(x_n)\|^2\|r_n\|^2}\bigg)
\end{equation}
with  $r_n=x_n-P_C\big(x_n - \tau_n A^*(I - P_Q)A x_n\big)$.

\item   If $\nabla f(x_n)=0$ or $r_n=0$, then  $x_{n+1}=x_n$ is a solution of the SFP \eqref{SFP} and the iterative process stops; otherwise, we set $n:=n+1$ and go to Step 1. 
\end{enumerate}
\end{algorithm}

\begin{remark}\rm
(i)
In Algorithm \ref{relaxed_Dynamic_CQ}, we can set $\tau_n=1$ and $\lambda_n=1$ when $\nabla f(x_n)=0$ or $r_n=0$. 
By Lemma \ref{fixed-point},   we see that if Algorithm \ref{relaxed_Dynamic_CQ} terminates in a finite (say $n$) step
of iterations, then $x_n$ is a solution of the SFP (\ref{SFP}). So in the rest of this section, we assume that
the Algorithm \ref{relaxed_Dynamic_CQ} does not terminate in any finite iterations, and hence generates an infinite
sequence $\{x_n\}$. Note that in this case, this sequence is well defined since $ \nabla f(x_n ) \neq 0$ and  $r_n\neq 0$ for any $n\ge 0$.

(ii) In Algorithm \ref{relaxed_Dynamic_CQ}, the parameters $\sigma$ and $\rho$ can vary with $n$. The convergence results remain valid under the  assumptions similar to those in \cite{Xu-dynamic-CQ}.

(iii) Algorithm \ref{relaxed_Dynamic_CQ} is an overrelaxation of the dynamic CQ method \eqref{CQ}-\eqref{dynamic_stepsize} when $\sigma$ is sufficiently close to one, since $\rho(4-\rho)\frac{f^2(x_n)}{\|\nabla f(x_n)\|^2\|r_n\|^2}>0.$

(iv) In Algorithm \ref{relaxed_Dynamic_CQ}, we assume that the projections \(P_C\) and \(P_Q\) can be easily computed. In some practical scenarios, however, evaluating these projections may be infeasible or computationally expensive. To address this issue, researchers have proposed relaxed variants by replacing the projections onto $C$ and $Q$ with projections onto half-spaces containing $C$ and $Q$ \cite{Dong-general-splitting-method,Dong-Optimal,{Yangqz}}. A relaxed variant of Algorithm \ref{relaxed_Dynamic_CQ} can be constructed in a similar manner.
\end{remark}

\subsection{Weak convergence}

From the proof of Lemma 3.4 in \cite{Xu-dynamic-CQ}, it follows the contraction inequality of the dynamic CQ algorithm \eqref{CQ}-\eqref{dynamic_stepsize}  given by 
\begin{equation}
\label{Contraction-dynamic-CQ}
\|x_{n+1}-z\|^2 \le  \|x_n - z\|^2 - \rho_n(4-\rho_n)\frac{f^2(x_n)}{\|\nabla f(x_n)\|^2},
\end{equation}
where $z\in \Omega$ is arbitrary.

By \eqref{Contraction-dynamic-CQ},  we have the below result.
\begin{lemma}\label{lem_dynamic_CQ}
Let 
$
y_n=P_C\big(x_n - \tau_n A^*(I - P_Q)A x_n\big),
$
where $\tau_n = \rho\frac{ f(x_n)}{\|\nabla f(x_n)\|^2}.$
 Then, for any $z\in\Omega$,
$$
\|y_n-z\|^2 \le  \|x_n - z\|^2 - \rho(4-\rho)\frac{f^2(x_n)}{\|\nabla f(x_n)\|^2}. 
$$
\end{lemma}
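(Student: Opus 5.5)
The plan is to observe that $y_n$ is exactly the iterate the dynamic CQ algorithm \eqref{CQ}--\eqref{dynamic_stepsize} produces from $x_n$ with $\rho_n=\rho$. Then \eqref{Contraction-dynamic-CQ} gives the claim. To be self-contained, I will reproduce the short argument behind \eqref{Contraction-dynamic-CQ}.

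Fix $z\in\Omega$ and set $u_n=x_n-\tau_n\nabla f(x_n)$, so $y_n=P_C u_n$. Since $z\in C$, the firm nonexpansiveness of $P_C$ (equivalently, the variational characterization of the projection) gives
$$\|y_n-z\|^2\le \|u_n-z\|^2-\|u_n-y_n\|^2\le \|u_n-z\|^2.$$
Expanding, we have
$$\|u_n-z\|^2=\|x_n-z\|^2-2\tau_n\langle \nabla f(x_n),x_n-z\rangle+\tau_n^2\|\nabla f(x_n)\|^2 .$$

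The key step is a lower bound on the inner product. Since $Az\in Q$, we have $P_QAz=Az$. Then
$$\langle \nabla f(x_n),x_n-z\rangle=\langle (I-P_Q)Ax_n,\,Ax_n-Az\rangle .$$
Writing $Ax_n-Az=(I-P_Q)Ax_n+(P_QAx_n-Az)$, the cross term $\langle (I-P_Q)Ax_n,P_QAx_n-Az\rangle$ is nonnegative by the projection inequality, because $Az\in Q$. Hence
$$\langle \nabla f(x_n),x_n-z\rangle\ge\|(I-P_Q)Ax_n\|^2=2f(x_n).$$
This step is the only real content; alternatively, one can use firm nonexpansiveness of $I-P_Q$.

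Substituting $\tau_n=\rho f(x_n)/\|\nabla f(x_n)\|^2$ gives
$$-4\tau_n f(x_n)+\tau_n^2\|\nabla f(x_n)\|^2=-4\rho\frac{f^2(x_n)}{\|\nabla f(x_n)\|^2}+\rho^2\frac{f^2(x_n)}{\|\nabla f(x_n)\|^2}=-\rho(4-\rho)\frac{f^2(x_n)}{\|\nabla f(x_n)\|^2}.$$
This yields the stated inequality. Throughout, the standing assumption $\nabla f(x_n)\neq 0$ keeps the expression well defined. I do not expect any serious obstacle; the only point needing care is the sign of the projection cross term.
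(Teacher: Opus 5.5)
Your proposal is correct and follows the paper's route. The paper proves the lemma in one line: it notes that $y_n$ is exactly the dynamic CQ iterate with $\rho_n=\rho$ and invokes \eqref{Contraction-dynamic-CQ} from the proof of Lemma 3.4 in \cite{Xu-dynamic-CQ}. Your self-contained derivation, which combines the projection inequality on $C$ with $\langle\nabla f(x_n),x_n-z\rangle\ge 2f(x_n)$ and then substitutes $\tau_n$, is the standard argument behind that cited inequality.
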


The following lemma establishes a contraction inequality of Algorithm \ref{relaxed_Dynamic_CQ} together with several related results.
\begin{lemma}\label{lem:3.2}
 Let the sequence $\{x_n\}$ be generated by Algorithm \ref{relaxed_Dynamic_CQ} and  $y_n=P_C\big(x_n - \tau_n A^*(I - P_Q)A x_n\big)$.
Let $D_n=\|x_n-y_n\|^2,
$ and $
N_n=\rho(4-\rho)\frac{f^2(x_n)}{\|\nabla f(x_n)\|^2}
$. Then
\begin{itemize}
\item[\rm(i)] for any $z\in\Omega$,
\begin{equation}
\label{Contraction-rCQ}
\|x_{n+1}-z\|^2\le\|x_n-z\|^2-\sigma(1-\sigma)\,\frac{(N_n+D_n)^2}{D_n}.
\end{equation}
    \item[\rm(ii)] for every $z\in \Omega$, $\lim_{n\to\infty}\|x_n-z\|$ exists.
    \item[\rm(iii)] $\{x_n\}$ is a bounded sequence.
    \item[\rm(iv)] $\sum_{n=0}^\infty\frac{(N_n+D_n)^2}{D_n}
< \infty$.
\end{itemize}
\end{lemma}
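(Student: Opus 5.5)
Part (i) is the core; (ii)--(iv) then follow by standard Fejér-monotonicity arguments. Since $x_{n+1}=(1-\lambda_n)x_n+\lambda_n y_n$, we can write $x_{n+1}-z=(1-\lambda_n)(x_n-z)+\lambda_n(y_n-z)$. Applying the identity \eqref{IDE} with $\alpha=\lambda_n$ (legitimate for any real $\alpha$, in particular $\lambda_n>1$) gives
\[
\|x_{n+1}-z\|^2=\lambda_n\|y_n-z\|^2+(1-\lambda_n)\|x_n-z\|^2-\lambda_n(1-\lambda_n)\|x_n-y_n\|^2 .
\]
Since $\lambda_n>0$, we may insert the bound $\|y_n-z\|^2\le\|x_n-z\|^2-N_n$ from Lemma \ref{lem_dynamic_CQ}. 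This yields
\[
\|x_{n+1}-z\|^2\le\|x_n-z\|^2-\lambda_n\big[N_n+(1-\lambda_n)D_n\big].
\]

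Next, observe from \eqref{lambda_n} that $\lambda_n=\sigma(1+N_n/D_n)=\sigma(N_n+D_n)/D_n$, with $N_n\ge 0$ and $D_n=\|r_n\|^2>0$ (the non-termination assumption in the Remark). Lemma \ref{simple_inequality} then converts the bracket exactly into $\sigma(1-\sigma)(N_n+D_n)^2/D_n$, which proves \eqref{Contraction-rCQ}. The only real care needed is noting that the identity holds for $\lambda_n>1$ and that $\lambda_n>0$ justifies using Lemma \ref{lem_dynamic_CQ}. That is the main (mild) obstacle; everything else is algebra.

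For (ii), (i) shows that $\{\|x_n-z\|\}$ is nonincreasing, since $\sigma(1-\sigma)>0$ and the subtracted term is nonnegative. Being bounded below by $0$, it converges. Part (iii) follows immediately, because $\|x_n\|\le\|x_n-z\|+\|z\|\le\|x_0-z\|+\|z\|$ for any fixed $z\in\Omega\neq\emptyset$. For (iv), summing \eqref{Contraction-rCQ} from $n=0$ to $M$ telescopes to
\[
\sigma(1-\sigma)\sum_{n=0}^{M}\frac{(N_n+D_n)^2}{D_n}\le\|x_0-z\|^2 .
\]
Letting $M\to\infty$ gives the summability.
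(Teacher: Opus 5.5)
Your proof is correct and follows the paper's argument. You expand $\|x_{n+1}-z\|^2$ via \eqref{IDE}, insert the bound of Lemma \ref{lem_dynamic_CQ} using $\lambda_n>0$, and convert the bracket with Lemma \ref{simple_inequality} after noting $\lambda_n=\sigma(N_n+D_n)/D_n$; your derivations of (ii)--(iv) by monotonicity, boundedness via a fixed $z\in\Omega$, and telescoping fill in what the paper calls immediate.
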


\begin{proof}
Since $x_{n+1}=(1-\lambda_n)x_n+\lambda_ny_n$, it follows from \eqref{IDE} and Lemma \ref{lem_dynamic_CQ} that
$$
\begin{aligned}
\|x_{n+1}-z\|^2
=&(1-\lambda_n)\|x_n-z\|^2+\lambda_n\|y_n-z\|^2
-\lambda_n(1-\lambda_n)\|x_n-y_n\|^2\\
\le&(1-\lambda_n)\|x_n-z\|^2+\lambda_n\bigg(\|x_n - z\|^2 - \rho(4-\rho)\frac{f^2(x_n)}{\|\nabla f(x_n)\|^2}\bigg)\\
&
-\lambda_n(1-\lambda_n)\|x_n-y_n\|^2\\
=&\|x_n-z\|^2-\lambda_n\big(N_n
+(1-\lambda_n)D_n\big).
\end{aligned}
$$
 Since $\lambda_n=\sigma(N_n+D_n)/{D_n},$
applying Lemma \ref{simple_inequality}, we have
$$
\|x_{n+1}-z\|^2\le\|x_n-z\|^2-\sigma(1-\sigma)\,\frac{(N_n+D_n)^2}{D_n},
$$
which in turns implies immediately the conclusions (ii)-(iv).
\end{proof}

The following lemma characterizes the range of $\sigma$, for which Algorithm \ref{relaxed_Dynamic_CQ} enjoys a stronger contraction inequality than the dynamic CQ algorithm \eqref{CQ}-\eqref{dynamic_stepsize}.
\begin{lemma}
\label{comparison}
Let
$
D_n=\|x_n-y_n\|^2
$ 
and
$
N_n=\rho(4-\rho)\frac{f^2(x_n)}{\|\nabla f(x_n)\|^2}.
$
 Then
\begin{equation}
\label{equ00}
\sigma(1-\sigma)\frac{(N_n+D_n)^2}{D_n}\geq N_n
\end{equation}
if and only if
\begin{equation}
\label{equ1}
\frac{\min\{N_n,D_n\}}{N_n+D_n}
\leq \sigma \leq
\frac{\max\{N_n,D_n\}}{N_n+D_n}.
\end{equation}
Moreover, if $N_n\neq D_n$, then
the inequality \eqref{equ00} is strict exactly
if and only if
$
\frac{\min\{N_n,D_n\}}{N_n+D_n}
<
\sigma
<
\frac{\max\{N_n,D_n\}}{N_n+D_n}.
$
\end{lemma}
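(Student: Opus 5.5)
The inequality reduces to a quadratic in $\sigma$, and \eqref{equ1} is exactly the interval between its two roots. Throughout, $D_n>0$ (since $r_n\neq 0$) and $N_n\ge 0$; in fact $N_n>0$, since $\nabla f(x_n)\neq 0$ forces $f(x_n)>0$. Set $S_n=N_n+D_n>0$. Multiplying \eqref{equ00} by $D_n/S_n^2>0$ shows it is equivalent to
\[
\sigma(1-\sigma)\ \ge\ \frac{N_nD_n}{S_n^2},
\qquad\text{i.e.}\qquad
\sigma^2-\sigma+\frac{N_nD_n}{S_n^2}\le 0 .
\]

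Next I will factor this quadratic. Write $a=N_n/S_n$ and $b=D_n/S_n$, so that $a+b=1$ and $ab=N_nD_n/S_n^2$. Then
\[
\sigma^2-\sigma+\frac{N_nD_n}{S_n^2}=\sigma^2-(a+b)\sigma+ab=(\sigma-a)(\sigma-b).
\]
A product $(\sigma-a)(\sigma-b)$ is nonpositive exactly when $\sigma$ lies between $a$ and $b$. In other words, the condition is $\min\{a,b\}\le\sigma\le\max\{a,b\}$, and this is precisely \eqref{equ1}.

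For the strict version, the same chain of equivalences gives that strict inequality in \eqref{equ00} holds if and only if $(\sigma-a)(\sigma-b)<0$. When $N_n\neq D_n$ we have $a\neq b$, so this is the open interval $\min\{a,b\}<\sigma<\max\{a,b\}$. The hypothesis $N_n\neq D_n$ is what makes the interval nondegenerate. If instead $N_n=D_n$, then $a=b=1/2$ and strict inequality never holds.

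There is no real obstacle. The only point needing care is that multiplying by $D_n/S_n^2$ preserves both the inequality and its strictness, which holds because $D_n>0$ (guaranteed by $r_n\neq0$ under Remark 3.1(i)) and hence $S_n>0$.
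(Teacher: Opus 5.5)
Your proof is correct and is essentially the paper's argument. The paper writes $\sigma(1-\sigma)(N_n+D_n)^2-N_nD_n=\bigl[\sigma(N_n+D_n)-N_n\bigr]\bigl[D_n-\sigma(N_n+D_n)\bigr]$, which equals $-(N_n+D_n)^2(\sigma-a)(\sigma-b)$, so it is your factorization without the normalization by $(N_n+D_n)^2$. You also make explicit two points the paper leaves implicit: why $D_n>0$, and the degenerate case $N_n=D_n$.
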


\begin{proof}
It is easy to verify that
\[
\begin{aligned}
\sigma(1-\sigma)\frac{(N_n+D_n)^2}{D_n}-N_n
&=
\frac{\sigma(1-\sigma)(N_n+D_n)^2-N_nD_n}{D_n}\\
&=
\frac{\bigl[\sigma(N_n+D_n)-N_n\bigr]
\bigl[D_n-\sigma(N_n+D_n)\bigr]}{D_n}.
\end{aligned}
\]
Therefore,
\eqref{equ00} holds
if and only if
\begin{equation}
\label{equ2}
\bigl[\sigma(N_n+D_n)-N_n\bigr]
\bigl[\sigma(N_n+D_n)-D_n\bigr]\leq0,
\end{equation}
which is equal to \eqref{equ1}.
If $N_n\neq D_n$, the inequality \eqref{equ2} is strict exactly when $
\frac{\min\{N_n,D_n\}}{N_n+D_n}
<
\sigma
<
\frac{\max\{N_n,D_n\}}{N_n+D_n}
$, and so is \eqref{equ00}.
\end{proof}

\begin{remark}\rm
By Lemma \ref{comparison},  Algorithm \ref{relaxed_Dynamic_CQ}
has a stronger contraction property than the dynamic CQ algorithm \eqref{CQ}-\eqref{dynamic_stepsize} when $D_n\neq N_n$ and $
\frac{\min\{N_n,D_n\}}{N_n+D_n}
< \sigma <
\frac{\max\{N_n,D_n\}}{N_n+D_n}.
$

Since
$
\frac{\min\{N_n,D_n\}}{N_n+D_n}
=
\frac12-\frac{|N_n-D_n|}{2(N_n+D_n)}
$
and
$
\frac{\max\{N_n,D_n\}}{N_n+D_n}
=
\frac12+\frac{|N_n-D_n|}{2(N_n+D_n)},
$
\eqref{equ1} can be equivalently written as
\[
\left|\sigma-\frac12\right|
\leq
\frac{|N_n-D_n|}{2(N_n+D_n)}.
\]
The above inequality always holds when $\sigma=\frac12$.
Hence, in this case,  the descent term in \eqref{Contraction-rCQ} of Algorithm \ref{relaxed_Dynamic_CQ}
 is always no smaller than that in \eqref{Contraction-dynamic-CQ} of the dynamic CQ algorithm \eqref{CQ}-\eqref{dynamic_stepsize}.
\end{remark}

\begin{theorem}
\label{weak-convergence-Theorem}
Let $\{x_n\}$ be generated by Algorithm \ref{relaxed_Dynamic_CQ}. Then $\{x_n\}$ converges weakly to a point in $\Omega$.
\end{theorem}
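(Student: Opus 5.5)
The proof will go through Lemma \ref{convergence_lemma} with $K=\Omega$. Condition (i) of that lemma is exactly Lemma \ref{lem:3.2}(ii), so all the work lies in showing $\omega_w(x_n)\subseteq\Omega$. For this I will extract two facts from Lemma \ref{lem:3.2}(iv). Since $(N_n+D_n)^2/D_n\ge D_n$ and $(N_n+D_n)^2/D_n\ge 4N_n$ by AM--GM, we get
\[
\sum_{n}D_n<\infty\quad\hbox{and}\quad\sum_n N_n<\infty .
\]
Hence $\|x_n-y_n\|\to0$ and $f^2(x_n)/\|\nabla f(x_n)\|^2\to0$.

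Next I will show $f(x_n)\to0$. By Lemma \ref{lem:3.2}(iii), $\{x_n\}$ is bounded. Since $\nabla f$ is $\|A\|^2$-Lipschitz and $\nabla f(z)=0$ for $z\in\Omega$, we have $\|\nabla f(x_n)\|\le\|A\|^2\|x_n-z\|\le M$ for some constant $M$. Therefore $f^2(x_n)\le M^2 f^2(x_n)/\|\nabla f(x_n)\|^2\to0$, that is, $\|(I-P_Q)Ax_n\|\to0$.

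Now take $x^*\in\omega_w(x_n)$ with $x_{n_j}\rightharpoonup x^*$. First, $Ax_{n_j}\rightharpoonup Ax^*$. The function $u\mapsto\|(I-P_Q)u\|^2$ is convex and continuous, hence weakly lower semicontinuous, so $\|(I-P_Q)Ax^*\|\le\liminf_j\|(I-P_Q)Ax_{n_j}\|=0$, giving $Ax^*\in Q$. Second, I need $x^*\in C$. Since $\|x_n-y_n\|\to0$, we have $y_{n_j}\rightharpoonup x^*$ with $y_{n_j}\in C$, and $C$ is closed and convex, hence weakly closed. Thus $x^*\in C$. This step is simpler than the usual demiclosedness argument because $y_n$ itself lies in $C$, so no control of $\tau_n$ is needed. (The stepsize $\tau_n$ could be unbounded, which would otherwise be the main obstacle.) Therefore $x^*\in\Omega$.

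The only delicate point is ensuring $N_n\to0$ actually forces $f(x_n)\to0$. That implication is exactly what the boundedness of $\nabla f(x_n)$ along the bounded sequence supplies. With both conditions of Lemma \ref{convergence_lemma} verified, $\{x_n\}$ converges weakly to a point of $\Omega$.
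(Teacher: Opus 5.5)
Your proof is correct and follows the same route as the paper: you apply Lemma \ref{convergence_lemma} with $K=\Omega$, derive $\|x_n-y_n\|\to0$ and $f(x_n)\to0$ from Lemma \ref{lem:3.2}(iv), get $x^*\in C$ because $y_{n_j}\in C$ converges weakly to $x^*$, and get $Ax^*\in Q$ by weak lower semicontinuity of $\mathrm{d}_Q$, which plays the role of the paper's demiclosedness of $I-P_Q$. The differences are only local: you obtain $D_n,N_n\to0$ from $(N_n+D_n)^2/D_n\ge D_n$ and $(N_n+D_n)^2/D_n\ge 4N_n$ instead of multiplying by the bounded $D_n$, and you bound $\|\nabla f(x_n)\|$ by Lipschitz continuity on a bounded sequence, whereas the paper uses $\|\nabla f(x)\|\le\|A\|\sqrt{2f(x)}$, which gives $f(x_n)\to0$ without boundedness.
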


\begin{proof}
By Lemma \ref{lem:3.2}(iv), we have
$\bigg(\|x_n-y_n\|^2+\rho(4-\rho)\frac{f^2(x_n)}{\|\nabla f(x_n)\|^2}\bigg)^2/\|x_n-y_n\|^2\rightarrow0$
as $n\rightarrow\infty.$ Using Lemma \ref{lem:3.2}(iii) and Lemma \ref{lem_dynamic_CQ}, we have the boundedness of $\{x_n\}$ and $\{y_n\}$. Therefore,
$$
\bigg(\|x_n-y_n\|^2+\rho(4-\rho)\frac{f^2(x_n)}{\|\nabla f(x_n)\|^2}\bigg)^2
=\frac{\bigg(\|x_n-y_n\|^2+\rho(4-\rho)\frac{f^2(x_n)}{\|\nabla f(x_n)\|^2}\bigg)^2}{\|x_n-y_n\|^2}\cdot\|x_n-y_n\|^2
\rightarrow0,
$$
as $n\rightarrow\infty.$
And we conclude
\begin{equation}
\label{eq1}
\|x_n-y_n\|\rightarrow0,\quad\hbox{and}\quad \frac{f^2(x_n)}{\|\nabla f(x_n)\|^2}\rightarrow0,\quad \hbox{as}\,\,n\rightarrow\infty.
\end{equation}
Since $\nabla f(x)=A^*(I-P_Q)Ax$, we have $\|\nabla f(x_n)\|\le\|A\|\,\|(I-P_Q)Ax_n\|=\|A\|\sqrt{2f(x_n)}$, so
\[
\frac{f^2(x_n)}{\|\nabla f(x_n)\|^2}\ge\frac{f(x_n)}{2\|A\|^2}\ \ge 0.
\]
Combined with $f^2(x_n)/\|\nabla f(x_n)\|^2\to0$ in \eqref{eq1}, this yields
\begin{equation}
\label{eq2}
f(x_n)\rightarrow0,\qquad\text{i.e.,}\qquad \|(I-P_Q)Ax_n\|\rightarrow0,\quad \hbox{as}\,\,n\rightarrow\infty.
\end{equation}

Since $\{x_n\}$ is bounded from Lemma \ref{lem:3.2}(iii), we have $\omega_w(x_n)\neq\emptyset$. Take arbitrarily $\bar x\in \omega_w(x_n)$ and there exists a weakly convergent subsequence $x_{n_j}\rightharpoonup\bar x$ as $j\rightarrow\infty.$ By \eqref{eq1}, we also have $y_{n_j}\rightharpoonup\bar x$ as $j\rightarrow\infty.$
From the fact that $y_{n_j}\in C$ and $C$ is closed and convex,  we get $\bar x\in C$.
 Since $A$ is a bounded linear operator,  $Ax_{n_j}\rightharpoonup A\bar x$ as $j\rightarrow\infty.$ Using \eqref{eq2} and the demiclosedness of $I-P_Q$ at $0$ (see, e.g., \cite{GK1990}), we get $A\bar x\in Q$.
Therefore $\bar x\in\Omega$.
Combining Lemmas \ref{convergence_lemma} and \ref{lem:3.2}(i), we obtain the desired result.
\end{proof}

\subsection{Linear convergence}

In this subsection, we show the linear convergence of Algorithm \ref{relaxed_Dynamic_CQ} provided that the SFP \eqref{SFP} satisfies the bounded linear regularity property.
\begin{theorem}
\label{linear-convergence}
Suppose  that the SFP \eqref{SFP} satisfies the bounded linear regularity property.
Then  the sequence $\{x_n\}$ generated by Algorithm \ref{relaxed_Dynamic_CQ} converges linearly to a point $x^*\in\Omega$ satisfying
\begin{equation}
\label{eq0}
\|x_n-x^*\|\ \le cq^n,\qquad\forall n\ge0,
\end{equation}
where $c\in (0,\infty)$ and $q \in (0,1)$.
\end{theorem}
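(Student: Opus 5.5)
The plan is the standard Fejér-monotone-plus-error-bound argument. Apply the contraction inequality \eqref{Contraction-rCQ} of Lemma \ref{lem:3.2}(i) with $z=P_\Omega(x_n)$. This gives
\[
{\rm d}_\Omega^2(x_{n+1})\le \|x_{n+1}-P_\Omega(x_n)\|^2\le {\rm d}_\Omega^2(x_n)-\sigma(1-\sigma)\frac{(N_n+D_n)^2}{D_n}.
\]
Since $(N_n+D_n)^2/D_n\ge N_n+D_n$, the descent term is at least $\sigma(1-\sigma)(N_n+D_n)$. The goal is to show $N_n+D_n\ge \kappa\,{\rm d}_\Omega^2(x_n)$ for a constant $\kappa>0$ independent of $n$. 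That yields ${\rm d}_\Omega^2(x_{n+1})\le(1-\sigma(1-\sigma)\kappa){\rm d}_\Omega^2(x_n)$, so ${\rm d}_\Omega(x_n)$ decays geometrically.

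To obtain this error bound, first fix $r$ with $\{x_n\}\cup\{y_n\}\subset\overline{B(0,r)}$ and $\Omega\cap\overline{B(0,r)}\neq\emptyset$. This is possible by Lemma \ref{lem:3.2}(iii), Lemma \ref{lem_dynamic_CQ}, and Fejér monotonicity. Let $\gamma=\gamma_r$ be the constant from Definition \ref{Reg-Def}. Since $y_n\in C\cap\overline{B(0,r)}$, regularity gives $\gamma\,{\rm d}_\Omega(y_n)\le {\rm d}_Q(Ay_n)$. Then:
\[
{\rm d}_\Omega(x_n)\le \|x_n-y_n\|+{\rm d}_\Omega(y_n)\le \|x_n-y_n\|+\tfrac1\gamma\big({\rm d}_Q(Ax_n)+\|A\|\|x_n-y_n\|\big),
\]
where we used that ${\rm d}_Q$ is $1$-Lipschitz. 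Next, ${\rm d}_Q(Ax_n)^2=2f(x_n)$. As in the proof of Theorem \ref{weak-convergence-Theorem}, $f(x_n)\le 2\|A\|^2 f^2(x_n)/\|\nabla f(x_n)\|^2 = \frac{2\|A\|^2}{\rho(4-\rho)}N_n$, so ${\rm d}_Q(Ax_n)^2\le c_1N_n$ with $c_1$ independent of $n$. Squaring and using $(a+b)^2\le 2a^2+2b^2$ gives ${\rm d}_\Omega^2(x_n)\le c_2(D_n+N_n)$. This is the desired bound with $\kappa=1/c_2$. Note that $\kappa\le 1/(\sigma(1-\sigma))$ is automatic, since the left side of the recursion is nonnegative, so $q_0^2:=1-\sigma(1-\sigma)\kappa\in[0,1)$; replace $q_0$ by a value in $(0,1)$ if necessary.

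Finally, pass from ${\rm d}_\Omega(x_n)\le {\rm d}_\Omega(x_0)q_0^n$ to convergence of $x_n$ itself. By Fejér monotonicity, $\|x_{m}-P_\Omega(x_n)\|\le {\rm d}_\Omega(x_n)$ for all $m\ge n$. Hence $\|x_m-x_n\|\le 2{\rm d}_\Omega(x_n)$, so $\{x_n\}$ is Cauchy and converges strongly to some $x^*$. Since $\Omega$ is closed and ${\rm d}_\Omega(x_n)\to0$, we get $x^*\in\Omega$; this is consistent with Theorem \ref{weak-convergence-Theorem}. Letting $m\to\infty$ gives $\|x_n-x^*\|\le 2{\rm d}_\Omega(x_0)q_0^n$, which is \eqref{eq0} with $c=2{\rm d}_\Omega(x_0)$ and $q=q_0$. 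If $c=0$, any positive $c$ works.

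The main obstacle is the error bound ${\rm d}_\Omega^2(x_n)\lesssim N_n+D_n$. The iterate $x_n$ need not lie in $C$, so regularity must be applied at $y_n$ and transferred back through $D_n$. The bound also needs a ball radius chosen uniformly for all $n$.
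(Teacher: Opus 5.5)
Your proposal is correct and follows essentially the same route as the paper. It applies bounded linear regularity at $y_n\in C$ and transfers the bound back to $x_n$ through $D_n$. It bounds $f(x_n)$ by a multiple of $N_n$ using $\|\nabla f(x_n)\|^2\le 2\|A\|^2 f(x_n)$, inserts the resulting error bound into Lemma~\ref{lem:3.2}(i) with $z=P_\Omega(x_n)$, and finishes with the Fej\'er estimate $\|x_m-x_n\|\le 2\,{\rm d}_\Omega(x_n)$. The differences are cosmetic: you use $(N_n+D_n)^2/D_n\ge N_n+D_n$ rather than $\ge D_n+2N_n$, and you explicitly check $\Omega\cap\overline{B(0,r)}\neq\emptyset$. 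Your rate also holds from $n=0$, giving $c=2\,{\rm d}_\Omega(x_0)$ directly, whereas the paper carries an unneeded index $N$.
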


\begin{proof}
Since the sequence $\{y_n\}$ is bounded from Lemma \ref{lem_dynamic_CQ} and \ref{lem:3.2}(iii), there exists $r$ such that $\{y_n\}\subseteq C\cap \overline{{B}(0,r)}$.
By assumption that the SFP \eqref{SFP} satisfies the bounded linear regularity property, it follows from definition \ref{Reg-Def} that there exists $\gamma_r>0$ such that
\begin{equation}
\label{eq3}
\mathrm{d}_Q(Ay_n)\ge \gamma_r \mathrm{d}_\Omega(y_n) \quad \text{for any } n\ge 0.
\end{equation}
Let $D_n=\|x_n-y_n\|^2$. Since $I-P_Q$ is nonexpansive, we have
\begin{equation}
\label{eq4}
\begin{aligned}
\mathrm d_Q(Ay_n)&=\|(I-P_Q)Ay_n\|\\
& \le\ \|(I-P_Q)Ax_n\|+\|A(x_n-y_n)\|\\
& \le\ \sqrt{2f(x_n)}+\|A\|\sqrt{D_n}.
\end{aligned}
\end{equation}
Combining \eqref{eq3} and \eqref{eq4}, we get
$$
\mathrm d_\Omega(y_n)\ \le\ \frac{1}{\gamma_r}\sqrt{2f(x_n)}+\frac{\|A\|}{\gamma_r}\sqrt{D_n}. 
$$
Since  the triangle inequality implies
$
\mathrm d_\Omega(x_n) \le\mathrm d_\Omega(y_n)+\|x_n-y_n\| = \mathrm d_\Omega(y_n)+\sqrt{D_n},
$
we have
\[
\mathrm d^2_\Omega(x_n)\ \le\ \frac{4}{\gamma_r^2}f(x_n)+2\Big(\frac{\|A\|}{\gamma_r}+1\Big)^2D_n.
\]
Let $
K=\max\left\{\frac{2}{\gamma_r^2},\ 2\Big(\frac{\|A\|}{\gamma_r}+1\Big)^2\right\},
$
then
\begin{equation}
\label{eq7}
\mathrm d^2_\Omega(x_n)\ \le\ K\big(D_n+2f(x_n)\big).
\end{equation}
Let $
N_n=\rho(4-\rho)\frac{f^2(x_n)}{\|\nabla f(x_n)\|^2}
$. Since $\|\nabla f(x_n)\|\le\|A\|\sqrt{2f(x_n)}$, we have
$
N_n\ge\frac{\rho(4-\rho)}{2\|A\|^2}f(x_n).
$
Let $\eta=\min\left\{1,\ \frac{\rho(4-\rho)}{2\|A\|^2}\right\}\in(0,1]$. Then $N_n\ge \eta\,f(x_n)$ and $D_n\ge \eta\,D_n$, so
\begin{equation}
\label{eq10}
\frac{(N_n+D_n)^2}{D_n}\ \ge\ D_n+2N_n\ \ge\ \eta\big(D_n+2f(x_n)\big)\ \overset{\eqref{eq7}}{\ge}\ \frac{\eta}{K}\,\mathrm d^2_\Omega(x_n).
\end{equation}
Using Lemma \ref{lem:3.2}(i)
 and  \eqref{eq10}, we obtain
$$
\begin{aligned}
\mathrm d^2_\Omega(x_{n+1})\le&\ \mathrm  d^2_\Omega(x_n)-\sigma(1-\sigma)\,\frac{(N_n+D_n)^2}{D_n}\\
\ \le& \Big(1-\frac{\sigma(1-\sigma)\eta}{K}\Big)\mathrm d^2_\Omega(x_n),
\end{aligned}
$$
which yields
\begin{equation}
\label{th51-3}
{\rm d}_\Omega(x_{n+1})
\leq q^{n-N+1}{\rm d}_\Omega(x_N),\quad{\rm for ~any}~n\geq N.
\end{equation}
where $q:=\sqrt{1-\dfrac{\sigma(1-\sigma)\eta}{K}}\in(0,1)$.
Now fix $n\geq N$. Recalling that $\{\|x_m-P_\Gamma(x_{n+1})\|\}_{m\geq n}$ is monotone decreasing by Lemma \ref{lem:3.2}(i), we conclude, for any $m>n$, that
$$
\aligned
\|x_m-x_{n+1}\|&\leq\|x_m-P_\Gamma(x_{n+1})\|+\|x_{n+1}-P_\Gamma(x_{n+1})\|\\
&\leq2\|x_{n+1}-P_\Gamma(x_{n+1})\|=2{\rm d}_\Omega(x_{n+1}).
\endaligned
$$
This, together with \eqref{th51-3}, implies that
\begin{equation}
\label{equation1}
\|x_m-x_{n+1}\|\leq2q^{n-N+1}{\rm d}_\Omega(x_N),
\quad{\rm for ~any}~m>n\geq N.
\end{equation}
It follows that $\{x_n\}$ is a Cauchy sequence and converges to some point
$x^*$. By Theorem \ref{weak-convergence-Theorem}, $x^*$ is a solution of the
SFP \eqref{SFP}.
 Using \eqref{equation1} again, we get
$$
\|x_{n+1}-x^*\|
\leq2q^{n-N+1}{\rm d}_\Omega(x_N)
\quad {\rm for ~any}~n\geq N.
$$
Then \eqref{eq0} holds, where 
$$
c:=\max\left\{2{\rm d}_\Omega(x_N)q^{-N},
\max_{i=1,\ldots,N}\|x_i-x^*\|q^{-i}\right\}>0.
$$
Thus we complete the proof.
\end{proof}

\section{Numerical results}

In this section, we apply the Algorithm \ref{relaxed_Dynamic_CQ}  to solve the LASSO problem. First, we investigate the optimal selections of $\sigma$ and  the corresponding values of $\lambda_n$ for different $\sigma$. Then we compare  Algorithm \ref{relaxed_Dynamic_CQ} with the dynamic CQ method \eqref{CQ}-\eqref{dynamic_stepsize}.

\begin{example}
\label{Lasso_Problem} {\rm Consider the following LASSO problem \cite{Tibshirani1996}:
\begin{equation}\label{lasso}
\min\left\{\frac{1}{2}\|Ax-b\|_{2}^{2} \,:\, x\in  \Bbb R^t, \|x\|_1 \leq \delta\right\},
\end{equation}
where $A\in  \Bbb R^{s\times t}$, $s< t$, $b\in  \Bbb R^s$ and $\delta >0$. We generate the system matrix $A$ from a standard normal distribution with mean zero and unit variance. The true sparse signal $x^*$ is generated from uniformly distribution in the interval $[-2,2]$ with random $K$ position nonzero while the rest is kept zero. The sample data $b= Ax^*$. Under certain conditions on matrix $A$, the solution of the minimization problem (\ref{lasso}) is
equivalent to the $\ell_0$-norm solution of the underdetermined linear system.}
\end{example} 

It is easy to see that the LASSO problem \eqref{lasso} can be cast as the  SFP (\ref{SFP}) with $C = \{ x \,|\, \|x\|_1 \leq \tau\}$ and $Q = \{b\}$. Since the projection onto the closed convex set $C$ does not admit a closed-form expression, we employ the subgradient projection instead; see, e.g., \cite{Dong-general-splitting-method,Dong-Optimal}.
We initialize the algorithms at the
origin and terminate it when
$$
\frac{\|x_n-x_{n-1}\|}{\max\{1,\|x_n\|\}}<10^{-4}.
$$
We report the final error between the reconstructed signal
and the true signal, that is, $Err = \|x_n - x^*\|$. In the numerical
results listed in the following table and figure, ``Iter"  denotes the number of iterations.

\begin{figure}[htbp]
\setlength{\floatsep}{0pt} \setlength{\abovecaptionskip}{0pt}
\centering
\scalebox{0.5} {\includegraphics{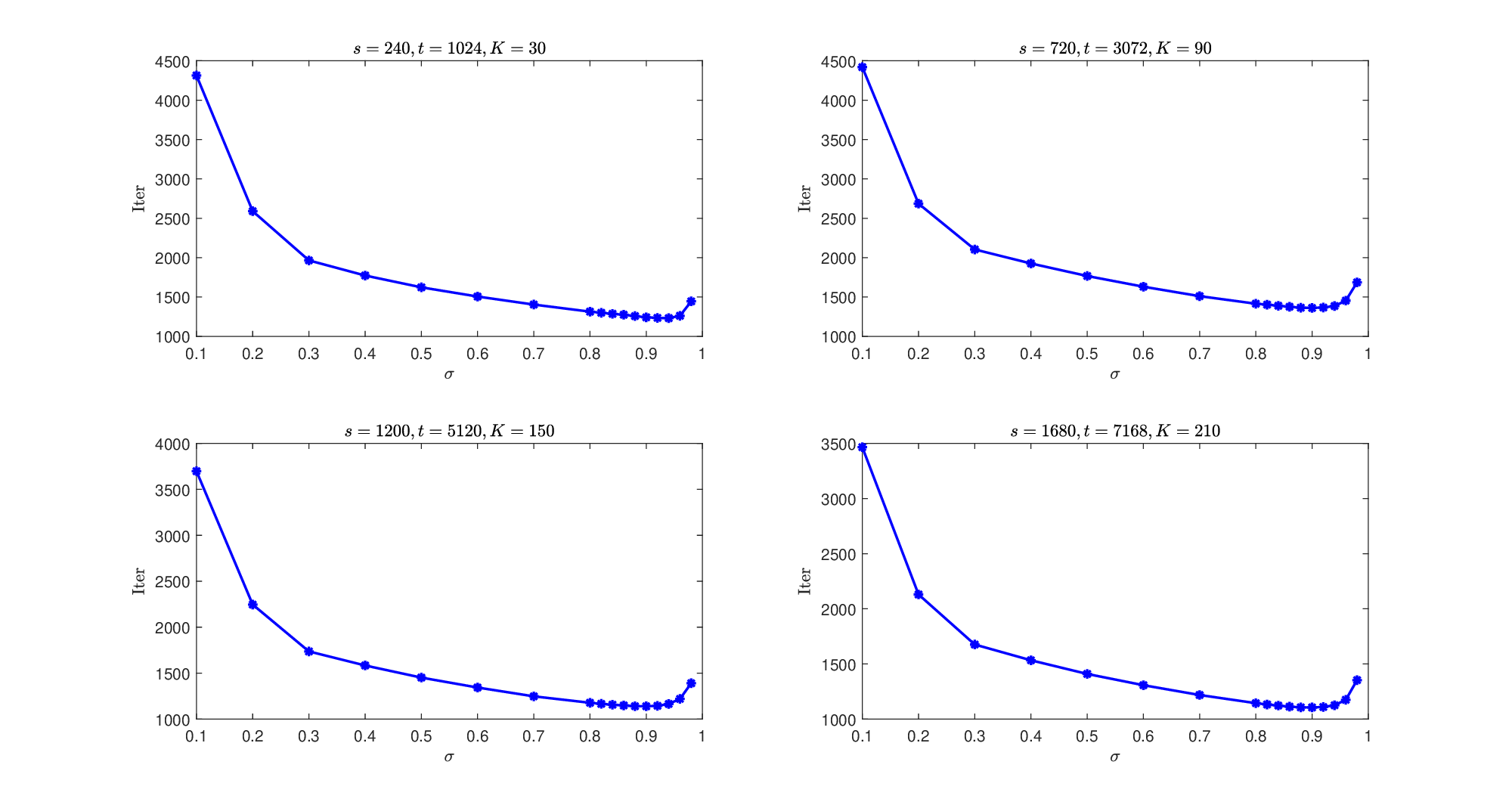}}
\caption{Comparison of the iteration numbers for different $\sigma$ for Example  \ref{Lasso_Problem}}
\label{fig-sigma}
\end{figure}

\begin{figure}[htbp]
\setlength{\floatsep}{0pt} \setlength{\abovecaptionskip}{0pt}
\centering
\scalebox{0.5} {\includegraphics{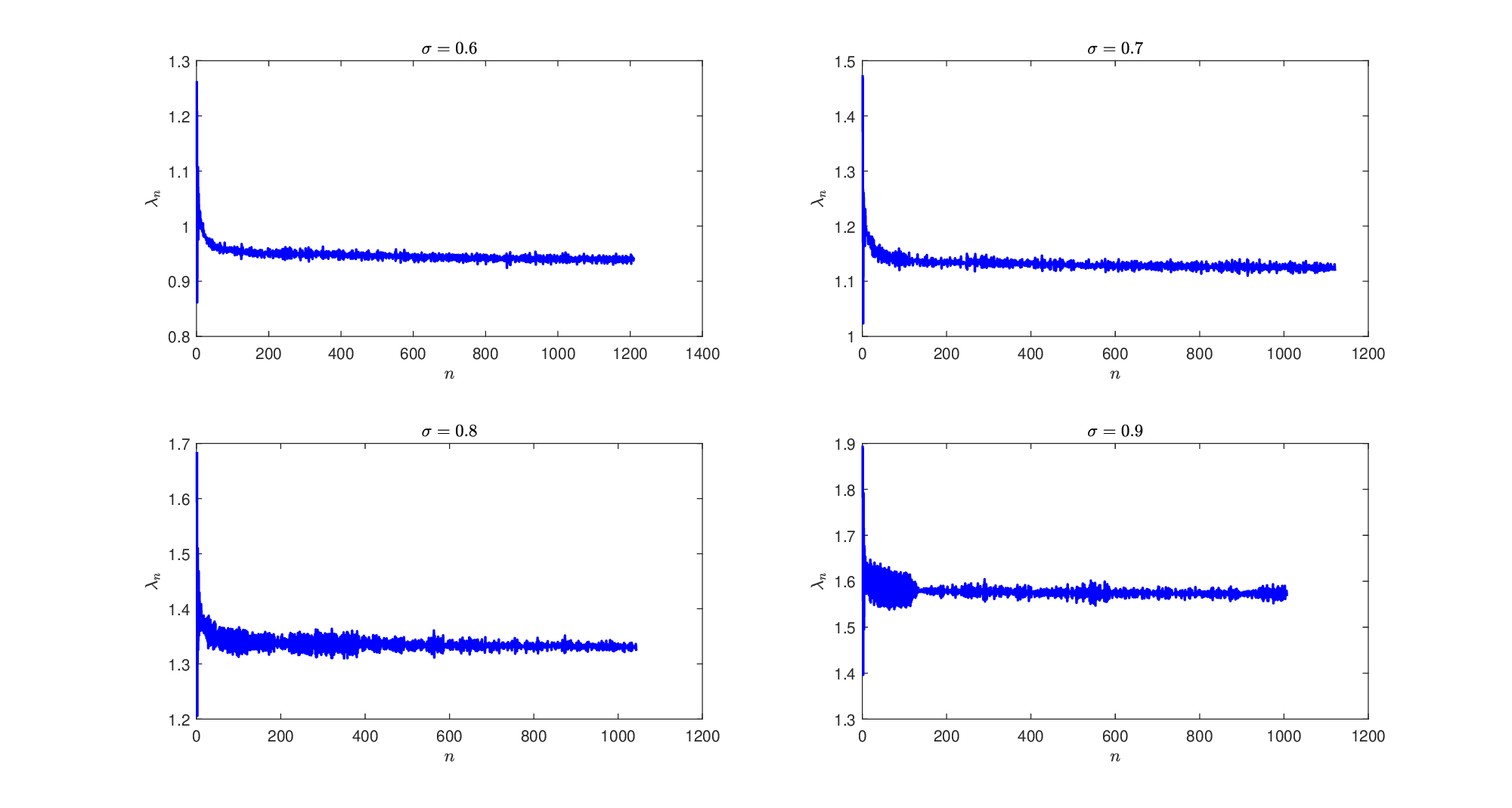}}
\caption{$\lambda_n$ given in \eqref{lambda_n} versus the iteration numbers for different $\sigma$ for  Example \ref{Lasso_Problem} with  $(s, t,K)=(1680,7168,210)$}
\label{fig-lambda}
\end{figure}

To obtain the optimal value of $\sigma\in(0,1)$, we tested  four problem sizes $(s,t,K)=(240i, 1048i,30i)$ for $i = 1, 3,5,7$ and compared the corresponding iteration numbers. From Figure \ref{fig-sigma}, it can be seen that the optimal choice of $\sigma$ is around $0.9$. Therefore, we took $\sigma=0.9$ in the sequent  numerical experiment. As shown in Figure \ref{fig-lambda}, $\lambda_n$  oscillates with $n$ and its average value increases with $\sigma$. Moreover, $\lambda_n>1$ for $\sigma=0.7, 0.8$ and $0.9$. Hence, for these choices of $\sigma$, Algorithm \ref{relaxed_Dynamic_CQ} is an overrelaxation of the dynamic CQ method \eqref{CQ}-\eqref{dynamic_stepsize}.

\begin{table}[htbp]
\label{table}
\footnotesize
\centering
\caption{Computational results for solving the SFP \eqref{SFP} with Algorithm \ref{relaxed_Dynamic_CQ} and the dynamic CQ algorithm}
\vskip 2mm
\begin{tabular}{ccccccccccccccccc}
\hline
\multicolumn{3}{c}{Problem size}  & \multicolumn{3}{c}{Iter} &
\multicolumn{3}{c}{CPU time}&\multicolumn{3}{c}{Err} \\
 \cline{1-3} \cline{5-6} \cline{8-9} \cline{11-12}
$s$ & $t$ & $K$ && Alg. \ref{relaxed_Dynamic_CQ}&dynamic CQ && Alg. \ref{relaxed_Dynamic_CQ}&dynamic CQ&& Alg. \ref{relaxed_Dynamic_CQ}&dynamic CQ \\
\hline
 \hline
240&1024&30&&1127&4275.3&&1.5188&5.2969&&0.0025&0.0028\\
480&2048&60&& 1146.5&4283.7&&4.5047&14.8219&&0.0035&0.0041\\
720&3072&90&&1155.7&4299.4&&17.8062& 61.9594&&0.0043&0.0050\\
960&4096&120&&1041.2&3902.4&&27.7859&110.0922&&0.0047&0.0054\\
1200&5120&150&&1146.7&4273.9&&55.7578&197.8453&&0.0057&0.0065\\
1440&6144&180&&1129.3&4168.4&&69.5734&256.9828&&0.0062&0.0071\\
1680&7168&210&&1.0659&3.9873&&85.3063&319.9078&&0.0065&0.0075\\
1920&8192&240&&1107&4147.9&&129.4328&475.2547&&0.0069&0.0079\\
 \hline
\hline
\end{tabular}
\label{results2}
\end{table}

\begin{figure}[htbp]
\setlength{\floatsep}{0pt} \setlength{\abovecaptionskip}{0pt}
\centering
\scalebox{0.5} {\includegraphics{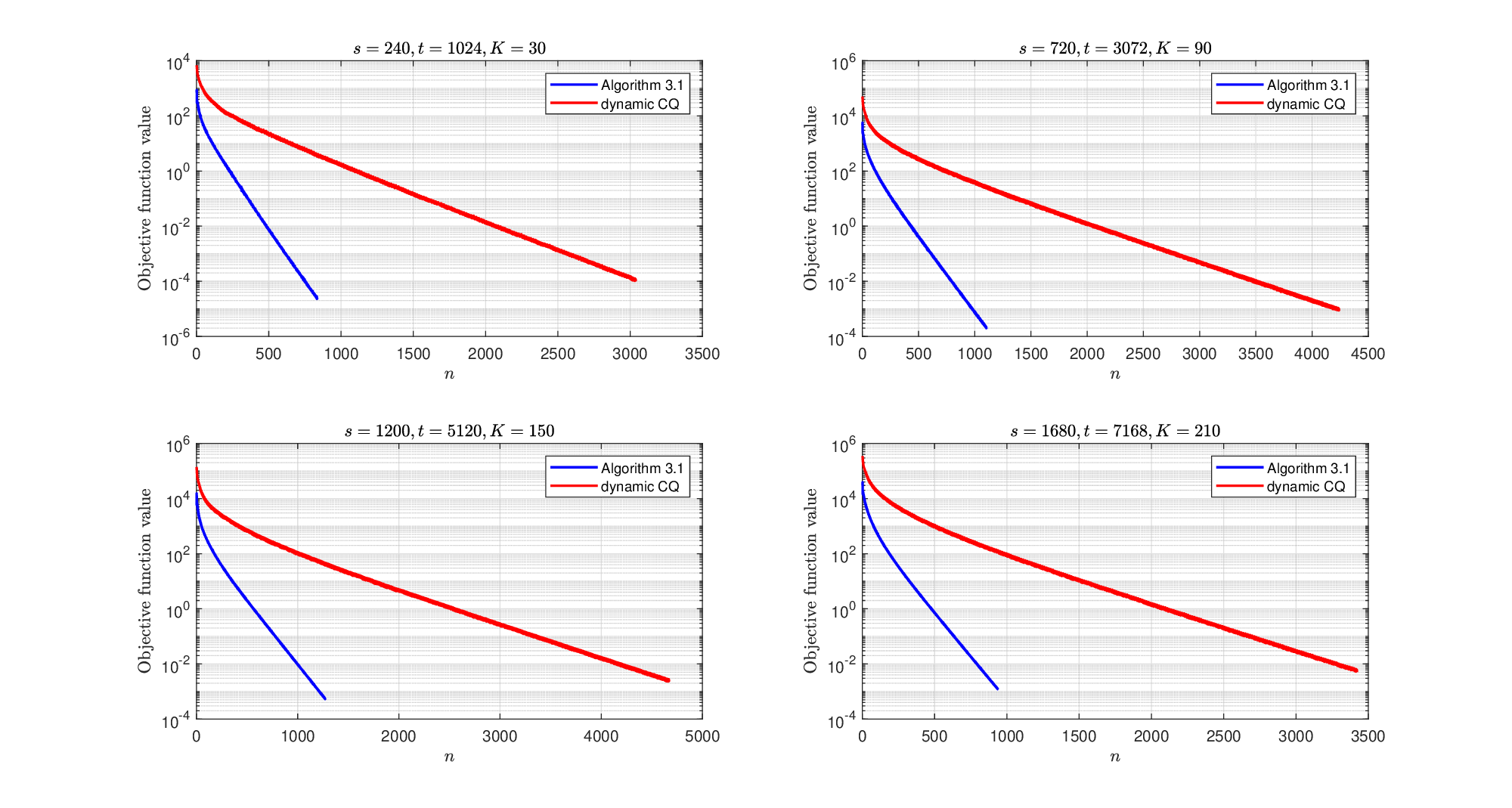}}
\caption{The objective function value  versus the iteration numbers for Example \ref{Lasso_Problem}}
\label{fig2}
\end{figure}

For comparing Algorithm \ref{relaxed_Dynamic_CQ} with the dynamic CQ method, we considered $(s,t,K) = (240i, 1048i,30i)$ for $i = 1, 2,\ldots,8$.
The corresponding results are reported in Table \ref{results2}. It can be seen that Algorithm \ref{relaxed_Dynamic_CQ} requires less than one-third of the number of iterations and CPU time of the dynamic CQ method. Figure \ref{fig2} shows the  objective function value versus iteration numbers when $(s,t,K)=(240i, 1048i,30i)$ for $i = 1, 3,5,7$.
We can see from Table \ref{results2} and Figure \ref{fig2} that Algorithm \ref{relaxed_Dynamic_CQ}  performs better than the dynamic CQ method in terms of   the computational accuracy and the objective function value. 

\section{Some concluding remarks}

In this paper, we study a relaxed version of the dynamic CQ method by introducing an adaptive strategy for selecting the relaxation parameters. The weak and linear convergence the proposed algorithm is established under the standard assumptions. Numerical experiments demonstrate that the adaptive overrelaxation can accelerate the convergence.

It is of interest to further investigate the convergence of the CQ algorithm \eqref{CQ} with fixed stepsizes and Armijo-type line-search strategies. Another interesting direction is to develop relaxed variants of general splitting methods \cite{Dong-general-splitting-method}, primal-dual-based iterative methods \cite{Zhaojing} and Polyak's gradient method \cite{Wangfh}.

\section*{Acknowledgments}
This work is supported by Flight Training-Related ``Open Competition" Projects--Fundamental Research Funds for the Central Universities [grant number 3122026FX05],

\end{document}